\def\bE{\mathbb E}
\def\bP{\mathbb P}
\def\bR{\mathbb R}
\def\ss{\mathbb S}
\def\cA{{\mathcal A}}
\def\cD{{\mathcal D}}
\def\cR{{\mathcal R}}
\def\G{{\mathcal G}}
\def\PP{(1-\rho^2)}
\def\PPP{(1-\rho^2)^{1/2}}
\def\X {X:=\{X(s),s\in\ss\}}
\def\Y {Y:=\{Y(s),s\in\ss\}}
\def\T{\tau_{s,t}}
\newtheorem{Def}{Definition}
\newtheorem{Th}{Theorem}[section]
\newtheorem{Prop}[Th]{Property}
\newtheorem{Co}[Th]{Corollary}
\newtheorem{Pro}[Th]{Proposition}
\newtheorem{Lem}[Th]{Lemma}
\numberwithin{equation}{section}
\title{Spatial Risk measure for Gaussian processes }
\author{M. Ahmed}
\address{Universit\'e de Lyon, Universit\'e Lyon 1, Institut Camille Jordan ICJ UMR 5208 CNRS, France\\
Department of statistics, University of Mosul, Iraq}
\author{ V. Maume-Deschamps}
\address{Universit\'e de Lyon, Universit\'e Lyon 1, Institut Camille Jordan ICJ UMR 5208 CNRS, France}
\author{P.Ribereau}
\address{Universit\'e de Lyon, Universit\'e Lyon 1, Institut Camille Jordan ICJ UMR 5208 CNRS, France}
\author{C.Vial}
\address{Universit\'e de Lyon, Universit\'e Lyon 1, Institut Camille Jordan ICJ UMR 5208 CNRS, INRIA, Villeurbanne, France}
\keywords{Risk measures, Spatial dependence, Gaussian processes}
\begin{document}




\begin{abstract}
{One} of the main characteristic {of climate events is} the spatial dependence.{ In this paper, we study the quantitative behavior of a }spatial risk measure $\cR(\cA,\cD)$ corresponding to a damage function $\cD$ and a region {$\cA\subset\bR^2$}, taking into account the spatial dependence of the underlying process.  This {kind of} risk measure has already been introduced and studied for some max-stable processes in \cite{koch2015spatial}. In this paper, we {consider isotropic Gaussian processes $X=(X(s))_{s\in\bR^2}$ and the excess damage function $\cD_{X\/,u}^+= (X-u)^+$ over a threshold $u\in\bR_+$. We performed a simulation study and a real data study.}
\end{abstract}
\maketitle




\section{Introduction}\label{I:11}

 A heat wave is a prolonged period where maxima temperatures are unusually high with respect to the usual ones. {Most} of the times, these heat waves have also a huge spatial {component}. For example, in 1936, an extremely severe heat wave hits North America. Many states record high temperatures set during this canicule stood until the canicule of 2012. In 2003, a major heat wave hit Europe (cf \cite{HeatWave}, \cite{HeatWave2}), specially France, leaving over 70,000 deaths (around 15,000 only in France). In France, this climatic event was exceptional due to its intensity since some cities registered eight consecutive days with temperature greater than 40$^{\circ}$, but it was also exceptional due to its spatial extent, covering almost all the country. In probability, this means that the underlying spatial process has a strong spatial dependence even at long distance.

 On the other hand, a ``classical'' storm type in the south of France is a {\it cevenol} event. These storms are a particular kind of rainfall, hiting usually the  {\it Cevennes} in France. They are characterized by extremely heavy and localized rainfalls that lead to severe floods. For example, on september 2002, the {\it Gard} department was hitted by an {exceptional} storm. Some locations received more than $700$mm of rain in $24$h. This event caused the death of 23 persons. Another example, on june 15th 2010 Draguignan was severly flooded (cf \cite{Draguignan}, \cite{Draguignan2}), leaving 27 dead and 1 billion Euros of damages. 
 
In both situations, one of the main characteristic of  the event is its spatial dependence: very strong even for large distances for the heat waves and strong at short distances and weak at larger for the cevenol events. When trying to detect the dangerousness of a region using risk measure, {the  notion of spatial dependence must taken into account.}

{Risk measures has been widely studied in literature in the univariate setting (i.e. for random variables). The axiomatic formulation of univariate risk measures has been presented in \cite{artzner1999coherent}. \cite{follmer2014spatial} is concerned with financial products on a network, collections of risk measures indexed by the network are considered.  The extension of definitions of risk measures to spatial processes may take various forms.  As mentioned previously,  one of the main characteristics of these phenomenon is the spatial dependence, for example, in the two previous examples (canicule and cevenol event), after normalisation of the marginals, the risk measure should not be the same since the phenomenons are spatially very different.\\ 
In \cite{keef2009spatial}, the authors proposed to evaluate the risk on a region by a probability $\bP(S<s)$ where $S$ is an integrated damage function. In  \cite{koch2015spatial} or \cite{KOCHErwan2014tools} this idea is developed to define  spatial risk measures taking into account the spatial dependence. In these works, the sensitivity of the risk measures with respect to spatial dependence and space is studied. In the same spirit as \cite{artzner1999coherent}, the author propose a set of axioms that a risk measure in the spatial context should verify. The author focus on max-stable processes. }

{We consider similar spatial risk measures well suited for Gaussian processes. Gaussian processes are relevant for some climate models (temperatures e.g.). As an example, in \cite{leblois2013}, a spatial rainfall generator based on a Gaussian process is proposed. 

Our main contributions concern {the risk measure} based  on the excess damage function $\cD_{X\/,u}^+= (X-u)^+$ over a threshold $u\in\bR_+$. We calculate the risk measure for Gaussian processes; we study its properties with respect to the parameters of the model (with a focus on the dependence parameter). Moreover, we study the axiomatic properties of a class of risk {measures}. }

\ \\
This paper is organized as follows: in {Section} \ref{R:2} we consider quite general spatial risk measures and develop the axiomatic setting of \cite{KOCHErwan2014tools}. Section \ref{R:4} is devoted to the study of the risk measure with damage function $(X-u)^+$ and Gaussian processes.  We propose explicit forms of this risk measure and derive its behavior. We present in Section \ref{R:5} a simulation study in order to evaluate this spatial risk measure. Finally, {we compute the risk measure on the air pollution in Northern Italy model proposed in \cite{bande2006spatio}} in Section \ref{R:6} and  concluding remarks are discussed in Section \ref{R:7}.


\section{Spatial risk measures. }\label{R:2}
Considering a process $X$ we will define a risk measure associated to $X$ on a region {$\cA\subset\bR^2$} of the space. It will be a non negative quantity which represents an average damage or cost due to $X$ on $\cA$. \\
Throughout the paper, $X$ is a spatial process: $X=(X(s)\/, \ s\in \mathbb{S})$ with $\mathbb{S}\subset {\bR^2}$. $\Vert \ \Vert$ denotes the euclidian norm on {$\bR^2$}. 
\subsection{Normalized loss function.}
A damage function $\cD$ represents the {relationship between the aggregate losses} (e.g economic, health) {and} the environmental (climate) indicator (e.g air pollution, temperature levels), {some economic interpretations may be found in  \cite{bosello2007estimating}.}
\begin{Def}\label{SRD-loss} 
\textbf{(Normalized loss function)} Consider a {damage function} $\mathcal{D}:\bR^d\to\bR^+$. For any set  $\cA\in\mathcal{B}(\bR^d)$ the normalized aggregate loss function on $\mathcal{A}$ is
\begin{equation}\label{SRE-loss1}
  L(\cA,\mathcal{D})=\frac{1}{|\cA|}\int_{\cA}\mathcal{D}(s)\quad\mathrm{d}s,
\end{equation}
{where $| \cA|$ stands for the volume (or the Lebesgue measure) of $\cA$.}
\end{Def}
The quantity $\displaystyle\int_{\cA}\mathcal{D}(s)\mathrm{d}s$ represents the aggregated loss over the region $\cA$. Therefore the function $ L(\cA,\mathcal{D})$ is the proportion of loss on $\cA$. {In our context, $\mathcal{D}$ will be a function of the process $X$, denoted $\mathcal{D}_X$.}

More precisely, we will focus on the excess damage function: let $u>0$ be fixed threshold for  $s\in\mathbb{S}$,
\begin{equation}\label{SRE-Dem1}
  \mathcal{D}^+_{X,u}(s)=(X(s)-u)^+.
\end{equation}
For example, when considering air pollutants (like in \cite{bande2006spatio}), $u$ is a regulatory level which is determined  by  experts.
\subsection{Spatial risk measures.}\label{R} 
{As already mentioned, }in spatial contexts the spatial dependency is an important characteristic. {Considering the risk measure as  the expectation of a normalized loss will not take}  into account  the spatial dependency, it is nevertheless useful to quantify the magnitude  of risk with respect to different thresholds $u$. 


{We shall} consider the spatial risk measure composed from two components: the expectation and variance of the normalized loss,
\begin{eqnarray}
\cR(\mathcal{A},\mathcal{D}_X)&=&\{\bE[L(\cA,\mathcal{D}_X)], \mathrm{Var}\big(L(\mathcal{A},\mathcal{D}_X)\big)\} \/, \label{SRE-risk}\\
&=:& \{\cR_0(\mathcal{A},\mathcal{D}_X)\/, \cR_1(\mathcal{A},\mathcal{D}_X)\} \nonumber
\end{eqnarray}
{For stationary processes, the expectation component gives informations on the severity of the phenomenon, while the variance component is impacted by the dependence structure.}\\
Let us remark that
{\begin{equation}\label{SRE-risk1b}
\cR_1(\mathcal{A},\mathcal{D}_X)=\frac{1}{|\cA|^2}\int_{\cA\times\cA}\mathrm{Cov}\big(\cD_X(s),\cD_X(t)\big)\mathrm{d}s\mathrm{d}t.
\end{equation} }
\subsection{Axiomatic properties of spatial risk measures.}\label{R:3}
{In \cite{artzner1999coherent},\cite{krokhmal2007higher},\cite{tsanakas2003risk} and others, axioms and the behavior of univariate risk measures are presented, while \cite{koch2015spatial} provides an axiomatic setting of risk measures in a spatial context.}

In this section we will {present} a set of spatial axiomatic properties describing the behavior of { a real valued spatial risk measure $\cR^*(\cA,\cD)$}. Axioms 1. and 4. below have been introduced in \cite{koch2015spatial}, and studied for some max-stable processes. 
\begin{Def}
Let $\cA\subset\bR^2$ be a region of the space. 
\begin{enumerate}
\item \textbf{Spatial invariance under translation}\\
Let $\cA+v\subset\bR^2$ be the region $\cA$ translated by a vector $v\in\bR^2$. Then for $v\in\bR^2$, {$\cR^*(\cA+v,\cD)=\cR^*(\cA,\cD)$}.

\item \textbf{{Spatial anti-monotoncity}}\\
 Let $\cA_1$,$\cA_2\subset\bR^2$, two regions such that { $|\cA_1|\leq|\cA_2|$}, then {$\cR^*(\cA_2,\cD)\leq\cR^*(\cA_1,\cD)$}.
\item \textbf{{Spatial sub-additivity}}\\
Let $\cA_1$,$\cA_2\subset\bR^2$ be two regions disjointed, then {$\cR^*(\cA_1\cup\cA_2,\cD)\leq\cR^*(\cA_1,\cD)+\cR^*(\cA_2,\cD)$}.
\item \textbf{Spatial super sub-additivity}\\
Let $\cA_1$,$\cA_2\subset\bR^2$ be two regions disjointed, then {$\cR^*(\cA_1\cup\cA_2,\cD)\leq\min_{i=1\/,2}\left[\cR^*(\cA_i,\cD)\right]$}.
\item \textbf{{Spatial homogeneity}}\\
Let $\lambda>0 $ and $\cA\subset \cR^2$ then {$\cR^*(\lambda\cA,\cD)=\lambda^k\cR^*(\cA,\cD)$, that is $\cR^*$ }is homogenous of order $k$, {where $\lambda \cA$ is the set $\{\lambda x, x\in\cA\}$.}

\end{enumerate}
\end{Def}
{Remark that in \cite{koch2015spatial}, the following damage functions are considered for some max-stable processes: $\cD_X(s) = \mathbf{1}_{\{X(s)>u\}}$, $\cD_X(s)= X(s)^\beta$.  The author proves the invariance by translation in this context, he also proves the monotonicity and super sub-additivity in the case where $\cA_1\/, \cA_2$ are either disks or squares.  We shall prove monotonicity for Gaussian processes and the damage function { $\cD_X(s)=\cD^+_{X,u}(s)=(X(s)-u)^+$} in the case where $\cA_1\/, \cA_2$ are either disks or squares (see Section \ref{R:4}).\\ 
\begin{Th}
Let $X$ be a stationary process and $\cD_X$ be a positive damage function of $X$. The risk measure $\cR_1(\cdot\/,\cD_X)$ is invariant by translation and sub-additive.\\
\end{Th}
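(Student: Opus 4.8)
The plan is to verify the two assertions directly from the integral representation of $\cR_1$ in \eqref{SRE-risk1b}, namely
\[
\cR_1(\mathcal{A},\mathcal{D}_X)=\frac{1}{|\cA|^2}\int_{\cA\times\cA}\mathrm{Cov}\big(\cD_X(s),\cD_X(t)\big)\,\mathrm{d}s\,\mathrm{d}t,
\]
using stationarity of $X$ to control the integrand.

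For \textbf{invariance by translation}, fix $v\in\bR^2$. First I would apply the change of variables $s=s'+v$, $t=t'+v$ in the integral over $(\cA+v)\times(\cA+v)$; the Jacobian is $1$ and $|\cA+v|=|\cA|$. This turns the domain back into $\cA\times\cA$ and replaces the integrand by $\mathrm{Cov}\big(\cD_X(s'+v),\cD_X(t'+v)\big)$. Since $X$ is stationary, the finite-dimensional law of $\big(X(s'+v),X(t'+v)\big)$ equals that of $\big(X(s'),X(t')\big)$, and because $\cD_X$ is a fixed (measurable, positive) functional applied pointwise, the pair $\big(\cD_X(s'+v),\cD_X(t'+v)\big)$ has the same law as $\big(\cD_X(s'),\cD_X(t')\big)$; hence the covariances coincide. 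This gives $\cR_1(\cA+v,\cD_X)=\cR_1(\cA,\cD_X)$.

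For \textbf{sub-additivity}, let $\cA_1,\cA_2$ be disjoint with $\cA=\cA_1\cup\cA_2$, so $|\cA|=|\cA_1|+|\cA_2|$. I would write $|\cA|^2\cR_1(\cA,\cD_X)$ as a double integral over $\cA\times\cA$ and split the domain into the four blocks $\cA_i\times\cA_j$, $i,j\in\{1,2\}$. The diagonal blocks give $|\cA_1|^2\cR_1(\cA_1,\cD_X)+|\cA_2|^2\cR_1(\cA_2,\cD_X)$. The key estimate is for the off-diagonal (cross) term $C_{12}:=\int_{\cA_1\times\cA_2}\mathrm{Cov}\big(\cD_X(s),\cD_X(t)\big)\,\mathrm{d}s\,\mathrm{d}t$ (and its symmetric twin $C_{21}=C_{12}$): by Cauchy–Schwarz applied to the covariance, $\mathrm{Cov}\big(\cD_X(s),\cD_X(t)\big)\le \sqrt{\Var(\cD_X(s))}\,\sqrt{\Var(\cD_X(t))}$, and by stationarity $\Var(\cD_X(s))=\sigma^2$ is constant; alternatively (and more cleanly) apply Cauchy–Schwarz directly to the integral:
\[
C_{12}\le\Big(\int_{\cA_1\times\cA_2}\Var(\cD_X(s))\,\mathrm{d}s\,\mathrm{d}t\Big)^{1/2}\Big(\int_{\cA_1\times\cA_2}\Var(\cD_X(t))\,\mathrm{d}s\,\mathrm{d}t\Big)^{1/2}
= |\cA_1|^{1/2}|\cA_2|^{1/2}\,\big(|\cA_1|\,|\cA_2|\big)^{1/2}\sigma^2,
\]
so $C_{12}+C_{21}\le 2|\cA_1|\,|\cA_2|\sigma^2$. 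Noting $|\cA_i|^2\cR_1(\cA_i,\cD_X)$ contains the same $\sigma^2$ as its leading behaviour, I would in fact argue more symmetrically: set $f_i(s,t)=\mathrm{Cov}(\cD_X(s),\cD_X(t))$ restricted to $\cA_i\times\cA_j$; the Cauchy–Schwarz bound on the cross integral in terms of the diagonal integrals gives $2C_{12}\le \frac{|\cA_2|}{|\cA_1|}\int_{\cA_1\times\cA_1}f+\frac{|\cA_1|}{|\cA_2|}\int_{\cA_2\times\cA_2}f$, whence
\[
|\cA|^2\cR_1(\cA,\cD_X)\le\Big(1+\tfrac{|\cA_2|}{|\cA_1|}\Big)|\cA_1|^2\cR_1(\cA_1,\cD_X)+\Big(1+\tfrac{|\cA_1|}{|\cA_2|}\Big)|\cA_2|^2\cR_1(\cA_2,\cD_X),
\]
and since $1+|\cA_2|/|\cA_1|=|\cA|/|\cA_1|$, the right side equals $|\cA|\big(|\cA_1|\cR_1(\cA_1,\cD_X)+|\cA_2|\cR_1(\cA_2,\cD_X)\big)\le |\cA|^2\big(\cR_1(\cA_1,\cD_X)+\cR_1(\cA_2,\cD_X)\big)$ because $|\cA_i|\le|\cA|$. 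Dividing by $|\cA|^2$ yields sub-additivity.

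The main obstacle is the bookkeeping on the cross term: one must bound it by the \emph{diagonal} contributions with exactly the right weights so that the $1/|\cA|^2$ normalization produces the clean inequality $\cR_1(\cA_1\cup\cA_2)\le\cR_1(\cA_1)+\cR_1(\cA_2)$ rather than a weighted version. The weighted Cauchy–Schwarz step $2C_{12}\le \lambda\int_{\cA_1^2}f+\lambda^{-1}\int_{\cA_2^2}f$ with the optimal $\lambda=|\cA_2|/|\cA_1|$ is what makes the constants collapse; one should also record that each $\int_{\cA_i^2}f=|\cA_i|^2\cR_1(\cA_i,\cD_X)\ge0$, which is automatic since it is a variance, so the inequality $|\cA_i|\le|\cA|$ can be used termwise. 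I expect the existence and finiteness of all integrals (so that Fubini and Cauchy–Schwarz apply) to follow from positivity of $\cD_X$ together with the standing assumption that the risk measure is well-defined, so no extra integrability hypothesis is needed beyond what is implicit in the statement.
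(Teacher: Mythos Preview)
Your final argument is correct and is essentially the same as the paper's, though your route to it is circuitous. For sub-additivity the paper recognises directly that the cross term equals $\mathrm{Cov}\big(\int_{\cA_1}\cD_X,\int_{\cA_2}\cD_X\big)$ and applies Cauchy--Schwarz at the level of these two random variables, obtaining $C_{12}\le|\cA_1|\,|\cA_2|\sqrt{\cR_1(\cA_1,\cD_X)\,\cR_1(\cA_2,\cD_X)}$; the conclusion then follows from $(a\sqrt{x}+b\sqrt{y})^2\le(a+b)^2(x+y)$. Your ``weighted Cauchy--Schwarz step'' $2C_{12}\le\lambda\!\int_{\cA_1^2}f+\lambda^{-1}\!\int_{\cA_2^2}f$ is exactly this same inequality combined with AM--GM, so the two proofs coincide once you reach it.

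Two remarks on the detours. First, the pointwise bound $C_{12}\le|\cA_1|\,|\cA_2|\sigma^2$ with $\sigma^2=\Var(\cD_X(s))$ is true but useless here: since in general $\sigma^2\ge\cR_1(\cA_i,\cD_X)$ (possibly much larger), it does not imply sub-additivity, and you were right to abandon it. Second, the displayed ``Cauchy--Schwarz applied directly to the integral'' is not a valid instance of the Cauchy--Schwarz inequality as written (the integrand $\mathrm{Cov}(\cD_X(s),\cD_X(t))$ is not a product of two $L^2$ functions); what actually justifies your key inequality is precisely the random-variable Cauchy--Schwarz that the paper uses. Drop the two false starts and state that step explicitly, and your proof matches the paper's.
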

\begin{proof}
The invariance by translation follows directly from the stationarity. On one other hand, consider $\cA_1$,$\cA_2\subset\bR^2$ two disjointed regions.
\begin{eqnarray*}
 \lefteqn{ {\cR_1}(\cA_1\cup \cA_2\/, \cD_X) =\mathrm{Var}\big(L(\cA_1\cup \cA_2,\cD_X)\big)}\\
 &=&\frac1{(|\cA_1|+|\cA_2|)^2}\left[|\cA_1|^2 \cR_1(\cA_1,\cD_X) +|\cA_2|^2 \cR_1(\cA_2,\cD_X) \right.\\
  &&  +\left. 2  \mathrm{Cov}\left(\int\limits_{\cA_1}\cD_X(s) \/ds\/, \int\limits_{\cA_2}\cD_X(s) \/ds\/\right) \right].\\
  &\leq&  \frac1{(|\cA_1|+|\cA_2|)^2}\left[|\cA_1|^2 \cR_1(\cA_1,\cD_X) +|\cA_2|^2 \cR_1(\cA_2,\cD_X) \right.\\
&&+ \left. 2 |\cA_1||\cA_2| \sqrt{\cR_1(\cA_1,\cD_X)}\sqrt{\cR_1(\cA_2,\cD_X)} \right]\\ 
&&\mbox{by using the Cauchy-Schwarz inequality}\/,\\
&\leq& \cR_1(\cA_1,\cD_X)+\cR_1(\cA_2,\cD_X)\/.
  \end{eqnarray*}
So that we have proved the sub-additivity.  

\end{proof}
}

\section{Risk measures for spatial Gaussian processes. }\label{R:4}
We consider the excess damage function $\cD_{X\/,u}^+ = (X-u)^+$, and $X$ an isotropic standard {spatial Gaussian process on $\ss\subset\bR^2$}, with auto-correlation function $\rho$. {Then for the fixed threshold $u$ and  $\cA\subset\ss$, we have}
\begin{equation}\label{model}
L(\cA,\cD^+_{X,u})=\frac{1}{|\cA|}\int_{\cA}\big(X(s)-u\big)^+\mathrm{d}s \/, \end{equation}  
$$\mbox{and} \ \cR_1(\mathcal{A},\mathcal{D}_{X,u}^+)= \mathrm{Var}(L(\cA,\cD^+_{X,u})) \/.$$
{In what follows, $\varphi$ is the density of the univariate standard normal distribution, $\overline\Phi$ is the survival function of the standard normal distribution, $\ell\big(u,v,w\big)$ is the total probability of a truncated bivariate standard normal distribution with correlation $w${, that is}
\begin{equation}\label{Th-trun}
\ell\big(u,v,w\big)=\frac{1}{2\pi(1-w^2)^{1/2}}\int_{u}^{\infty}\int_{v}^{\infty}e^{\left\{\frac{-1}{2(1-w^2)}[x^2-2w xy+y^2]\right\}}\mathrm{d}x\mathrm{d}y.
\end{equation}

In this Section, we first give explicit forms for the risk measure, then we will study the behavior of $\cR_1(\lambda\mathcal{A},\mathcal{D}_{X,u}^+)$ with respect to $\lambda$.}
\subsection{Explicit forms for {$\cR(\mathcal{A},\mathcal{D}_{X,u}^+)$}}
{We are interested in the explicit calculation of  the expectation and variance of $L(\mathcal{A},\mathcal{D}_{X,u}^+)$. }
\begin{Pro}\label{Pro-risk1}
Consider $\X$ an isotropic standard Gaussian process with auto-correlation function $\rho$. Let  $u\in\bR_+$ be a fixed threshold. We have:{\begin{equation}\label{Pro-exp}
\cR_0(\mathcal{A},\mathcal{D}_{X,u}^+)=\varphi(u)-u\overline{\Phi}(u),
\end{equation}}
and 
{\begin{equation}\label{Th-ex}
\cR_1(\mathcal{A},\mathcal{D}_{X,u}^+)=\frac{1}{|\mathcal{A}|^2}\int_{\mathcal{A}\times\mathcal{A}}\G(\T,u)\quad\mathrm{d}s\mathrm{d}t,
\end{equation}}
with $\tau_{s,\/t} = \Vert s-t\Vert$ {and for any $h\/, s\in\mathbb{S}$}
\begin{equation}\label{Pro-covfun}
{\begin{split}
\G(h,u):=&\mathrm{Cov}\big(\cD^+_{X,u}(s),\cD^+_{X,u}(s+h)\big);\\
\G(h,u)=&\big(\rho(h)+u^2\big)\ell\big(u,u,\rho(h)\big)-2u\varphi(u)\overline{\Phi}\bigg(\frac{u(1-\rho(h))}{(1-\rho^2(h))^{1/2}}\bigg)\\
+&\big(1-\rho^2(h)\big)^{1/2}\varphi\bigg(\frac{u}{(1+\rho(h))^{1/2}}\bigg)^2-\big(\varphi(u)-u\overline{\Phi}(u)\big)^2\/.
\end{split}}
\end{equation}
\end{Pro}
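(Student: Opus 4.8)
The plan is to handle the two components of $\cR$ separately; both reduce to elementary Gaussian integrals once stationarity and isotropy are used.

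For the expectation \eqref{Pro-exp}, Fubini gives $\cR_0(\cA,\cD^+_{X,u})=\frac{1}{|\cA|}\int_\cA\bE[(X(s)-u)^+]\,\mathrm{d}s$, and by stationarity the integrand is the constant $\bE[(X(0)-u)^+]=\int_u^\infty(x-u)\varphi(x)\,\mathrm{d}x$. Using $x\varphi(x)=-\varphi'(x)$ one has $\int_u^\infty x\varphi(x)\,\mathrm{d}x=\varphi(u)$ and $\int_u^\infty\varphi(x)\,\mathrm{d}x=\overline{\Phi}(u)$, hence $\cR_0=\varphi(u)-u\overline{\Phi}(u)$; this same quantity reappears, squared, as the last term of $\G(h,u)$ in \eqref{Pro-covfun}.

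For the variance, formula \eqref{SRE-risk1b} already writes $\cR_1$ as the double integral of $\mathrm{Cov}(\cD^+_{X,u}(s),\cD^+_{X,u}(t))$ over $\cA\times\cA$; by isotropy this covariance depends on $(s,t)$ only through $\|s-t\|$, which is precisely the definition of $\G(\T,u)$, giving \eqref{Th-ex}. Everything thus reduces to computing, for a standard bivariate normal pair $(X,Y)$ with correlation $w:=\rho(h)$,
\[
\G(h,u)=\bE[(X-u)^+(Y-u)^+]-\big(\varphi(u)-u\overline{\Phi}(u)\big)^2 .
\]
I would expand $(x-u)(y-u)=xy-u(x+y)+u^2$ and integrate against the bivariate density over $[u,\infty)^2$, so that $\bE[(X-u)^+(Y-u)^+]$ splits into three pieces: $u^2\,\ell(u,u,w)$, immediate from the definition \eqref{Th-trun} of $\ell$; the cross term $-u\,\bE[(X+Y)\mathbf{1}_{\{X>u,Y>u\}}]=-2u\,\bE[X\mathbf{1}_{\{X>u,Y>u\}}]$ by symmetry; and $\bE[XY\mathbf{1}_{\{X>u,Y>u\}}]$. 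The last two are evaluated by conditioning: write $Y=wX+(1-w^2)^{1/2}Z$ with $Z$ standard normal independent of $X$, carry out the inner Gaussian integral in $Z$ (this produces the factors $\overline{\Phi}\big(\tfrac{u-wx}{(1-w^2)^{1/2}}\big)$ and $\varphi\big(\tfrac{u-wx}{(1-w^2)^{1/2}}\big)$), and then integrate in $x$ by parts, using $x\varphi(x)=-\varphi'(x)$ for the first integral and additionally $x^2\varphi(x)=\varphi(x)-(x\varphi(x))'$ for the second. Two algebraic facts make the answer collapse: completing the square gives $\varphi(x)\,\varphi\big(\tfrac{u-wx}{(1-w^2)^{1/2}}\big)=\varphi(u)\,\varphi\big(\tfrac{x-wu}{(1-w^2)^{1/2}}\big)$, so the remaining one-dimensional integrals are again Gaussian; and the related identity $\varphi(u)\,\varphi\big(\tfrac{u(1-w)}{(1-w^2)^{1/2}}\big)=\varphi\big(\tfrac{u}{(1+w)^{1/2}}\big)^2$ is exactly the coefficient appearing in \eqref{Pro-covfun}, while the boundary terms at $x=u$ generate the factor $\overline{\Phi}\big(\tfrac{u(1-w)}{(1-w^2)^{1/2}}\big)$. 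Collecting all contributions and simplifying reproduces $(w+u^2)\,\ell(u,u,w)-2u\varphi(u)\,\overline{\Phi}\big(\tfrac{u(1-w)}{(1-w^2)^{1/2}}\big)+(1-w^2)^{1/2}\,\varphi\big(\tfrac{u}{(1+w)^{1/2}}\big)^2$.

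The step I expect to be most delicate is the term $\bE[XY\mathbf{1}_{\{X>u,Y>u\}}]$: it requires two successive integrations by parts with careful tracking of the boundary terms, and one has to recognize that the only non-elementary integral surviving at the end is again $\ell(u,u,w)$ (now with coefficient $w$), not a new transcendental. A cleaner route that I would record at least as a consistency check is Price's theorem: since $x\mapsto(x-u)^+$ is Lipschitz with a.e.\ derivative $\mathbf{1}_{\{x>u\}}$, one has $\frac{\partial}{\partial w}\bE[(X-u)^+(Y-u)^+]=\bE[\mathbf{1}_{\{X>u\}}\mathbf{1}_{\{Y>u\}}]=\ell(u,u,w)$, and together with the independence at $w=0$ this yields $\G(h,u)=\int_0^{w}\ell(u,u,t)\,\mathrm{d}t$; one then only needs to check that the right-hand side of \eqref{Pro-covfun} vanishes at $w=0$ and has $w$-derivative $\ell(u,u,w)$, a one-variable computation again resting on the completing-the-square identity above. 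Either way the proposition comes down to bookkeeping in standard, if lengthy, calculus.
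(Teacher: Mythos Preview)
Your proposal is correct. The expectation part is identical to the paper's proof. For the variance part, your approach and the paper's are the same in substance: both expand $(x-u)(y-u)$ and reduce the problem to the first two moments of the bivariate normal truncated to $[u,\infty)^2$. The only difference is that the paper simply quotes the formulas for
\[
\ell(u,u,\rho)\,m_{10}=\int_u^\infty\!\!\int_u^\infty x\,f_{X,Y}(x,y)\,\mathrm{d}x\,\mathrm{d}y,
\qquad
\ell(u,u,\rho)\,m_{11}=\int_u^\infty\!\!\int_u^\infty xy\,f_{X,Y}(x,y)\,\mathrm{d}x\,\mathrm{d}y
\]
from Rosenbaum (1961), whereas you re-derive them via the decomposition $Y=wX+(1-w^2)^{1/2}Z$ and integration by parts. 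What you call the ``most delicate'' step, the $XY$ term, is thus entirely absorbed in the paper by a citation; your completing-the-square identities are exactly what underlies Rosenbaum's formulas.

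Your Price's theorem route, on the other hand, is genuinely different from anything in the paper and arguably cleaner: it replaces all of the truncated-moment computation by the single observation $\partial_w\,\bE[(X-u)^+(Y-u)^+]=\ell(u,u,w)$, so one only needs to check that the closed form in \eqref{Pro-covfun} has the correct value at $w=0$ and the correct $w$-derivative. This bypasses Rosenbaum entirely and makes the appearance of $\ell(u,u,\rho(h))$ with coefficient $\rho(h)$ completely transparent, at the cost of invoking a differentiation-under-the-expectation result that needs the Lipschitz regularity of $x\mapsto(x-u)^+$.
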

\begin{proof}
Let $X$ be an isotropic standard Gaussian process and $u\in\bR_+$, 
\begin{equation}\label{univar1}
\begin{split}
\bE\big[L(\cA,\cD^+_{X,u})\big]=&\frac{1}{|\cA|}\int_{\cA}\bE\big[(X(s)-u\big)^+\big]\mathrm{d}s\\
=&\frac{1}{|\cA|}\int_{\cA}\bigg[\int_u^{\infty} x{\varphi(x)}\mathrm{d}x-u\int_u^{\infty}{\varphi(x)}\mathrm{d}x\bigg]\mathrm{d}s\\
=&\frac{1}{|\cA|}\int_{\cA}(\varphi(u)-u\overline{\Phi}(u))\mathrm{d}s\\
=&\varphi(u)-u\overline{\Phi}(u).
\end{split}
\end{equation}
{On one other hand,}
$$
\mathrm{Var}\big(L(\mathcal{A},\mathcal{D}_{X,u}^+)\big)=
\frac{1}{|\cA|^2}\int_{\cA\times\cA}\mathrm{Cov}\big(\cD^+_{X,u}(s),\cD^+_{X,u}(t)\big)\mathrm{d}s\mathrm{d}t\/.$$
We calculate $\mathrm{Cov}\big(\cD^+_{X,u}(s),\cD^+_{X,u}(t)\big)$ by using the results from \cite{rosenbaum1961moments} on moments $m_{10}\/, m_{11}$ of truncated bivariate normal distributions. {Let $f_{X_1,X_2}$ be the density function of the random vector $(X_1\/,X_2)$. } 
\begin{equation}\label{univar4}
\begin{split}
\bE\big[\cD^+_{X,u}(s)\cD^+_{X,u}(t)\big]=&\int_u^{\infty}\int_u^{\infty}\big(xy-2ux+u^2\big)f_{X(s),X(t)}\big(x,y\big)\mathrm{d}x\mathrm{d}y\\
=&\ell\big(u,u,\rho(\T)\big)m_{11}-2u\ell\big(u,u,\rho(\T)\big)m_{10}+u^2\ell\big(u,u,\rho(\T)\big),
\end{split}
\end{equation}
{with}
\begin{eqnarray*}
\ell\big(u,v,\rho\big) m_{10}&=&\frac{1}{2\pi\PPP}\int_{u}^{\infty}\int_{v}^{\infty}x\exp\left\{-\frac{1}{2\PP}\big[x^2+2\rho xy+y^2\big]\right\}\mathrm{d}x\mathrm{d}y\/, \\
&=&\varphi(u)\overline{\Phi}\bigg(\frac{v-\rho u}{\PPP}\bigg)+\rho\varphi(v)\overline{\Phi}\bigg(\frac{u-\rho v}{\PPP}\bigg); 
\end{eqnarray*}
and
\begin{eqnarray*}
\ell\big(u,v,\rho\big)m_{11}&=&\frac{1}{2\pi\PPP}\int_{u}^{\infty}\int_{v}^{\infty}xy\exp\left\{-\frac{1}{2\PP}\big[x^2+2\rho xy+y^2\big]\right\}\mathrm{d}x\mathrm{d}y,\\
&=&\rho\ell\big(u,v,\rho\big)+\rho u\varphi(u)\overline{\Phi}\bigg(\frac{v-\rho u}{\PPP}\bigg)+\rho v\varphi(v)\overline{\Phi}\bigg(\frac{u-\rho v}{\PPP}\bigg)\\
&&+\frac{\PPP}{\sqrt{2\pi}}\varphi\bigg(\frac{(u^2-2\rho uv+v^2)^{1/2}}{\PPP}\bigg).
\end{eqnarray*}
{For}  $v=u$, we have, 
$$\ell\big(u,u,\rho\big)m_{10}=(1+\rho)\varphi(u)\overline{\Phi}\bigg(\frac{u(1-\rho)}{\PPP}\bigg)$$
and 
$$\ell\big(u,u,\rho\big)m_{11}=\rho\ell\big(u,u,\rho\big)+2\rho u\varphi(u)\overline{\Phi}\bigg(\frac{u(1-\rho)}{\PPP}\bigg)+\frac{\PPP}{\sqrt{2\pi}}\varphi\bigg(\frac{(2u^2(1-\rho))^{1/2}}{\PPP}\bigg)\/.$$
{Finally we get,}
\begin{multline*}
\begin{split}
\bE\big[\cD^+_{X,u}(s)\cD^+_{X,u}(t)\big]=&\frac{(1-\rho^2(\T))^{1/2}}{\sqrt{2\pi}}\varphi\bigg(\frac{(2u^2(1-\rho(\T)))^{1/2}}{(1-\rho^2(\T))^{1/2}}\bigg)\\
+&2 u\rho(\T)\varphi(u)\overline{\Phi}\bigg(\frac{u(1-\rho(\T))}{(1-\rho^2(\T))^{1/2}}\bigg)\\
+&\rho(\T)\ell\big(u,u,\rho(\T)\big)-2u(1+\rho(\T))\varphi(u)\overline{\Phi}\bigg(\frac{u(1-\rho(\T))}{(1-\rho^2(\T))^{1/2}}\bigg)\\
+&u^2\ell\big(u,u,\rho(\T)\big)\\
\\
=&\ell\big(u,u,\rho(\tau_{s,t})\big)\big(\rho(\tau_{s,t})+u^2\big)-2u\varphi(u)\overline{\Phi}\bigg(\frac{u(1-\rho(\tau_{s,t}))}{(1-\rho^2(\tau_{s,t}))^{1/2}}\bigg)\\
+&\big(1-\rho^2(\tau_{s,t})\big)^{1/2}\varphi\bigg(\frac{u}{(1+\rho(\tau_{s,t}))^{1/2}}\bigg)^2.
\end{split}
\end{multline*}
{The result follows.}
\end{proof}
 
\begin{Co}\label{Co-risk11}
Let $\Y$ be an isotropic Gaussian process with mean $\mu$ and variance $\sigma^2$. {Let $X=\frac{Y-\mu}\sigma$ an isotropic and standard Gaussian process.} The spatial risk measure $\cR(\mathcal{A},\mathcal{D}_{Y,u}^+)$ statisfies
\begin{equation}\label{Th-risk}
\cR(\mathcal A,\mathcal{D}_{Y,u}^+)=\left\{\sigma\bE[L(\mathcal A,\mathcal{D}_{X,u_0}^+)],\sigma^2\mathrm{Var}\big(L(\mathcal A,\mathcal{D}_{X,u_0}^+)\big)\right\},
\end{equation}
with $u_0=(u-\mu)/\sigma$.
\end{Co}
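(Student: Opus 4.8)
The plan is to reduce the whole statement to a single pointwise scaling identity for the excess damage function and then propagate it through the linear operations (integration over $\cA$, expectation, variance) that define $\cR$. Since $Y(s)=\sigma X(s)+\mu$ and $\sigma>0$, for every $s\in\ss$ we have
\[
\cD^+_{Y,u}(s)=\big(Y(s)-u\big)^+=\big(\sigma X(s)+\mu-u\big)^+=\sigma\left(X(s)-\tfrac{u-\mu}{\sigma}\right)^+=\sigma\,\cD^+_{X,u_0}(s),
\]
where the third equality uses the elementary fact that $(\sigma z)^+=\sigma z^+$ for every $z\in\bR$ when $\sigma>0$, and $u_0=(u-\mu)/\sigma$.

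Next I would integrate this identity over $\cA$ and divide by $|\cA|$ to obtain the corresponding identity at the level of the normalized loss,
\[
L(\cA,\cD^+_{Y,u})=\frac{1}{|\cA|}\int_{\cA}\cD^+_{Y,u}(s)\,\mathrm{d}s=\frac{\sigma}{|\cA|}\int_{\cA}\cD^+_{X,u_0}(s)\,\mathrm{d}s=\sigma\,L(\cA,\cD^+_{X,u_0}),
\]
so that the two random variables $L(\cA,\cD^+_{Y,u})$ and $L(\cA,\cD^+_{X,u_0})$ differ only by the deterministic positive factor $\sigma$. Taking expectations gives $\cR_0(\cA,\cD^+_{Y,u})=\bE[L(\cA,\cD^+_{Y,u})]=\sigma\,\bE[L(\cA,\cD^+_{X,u_0})]$, and the relation $\mathrm{Var}(\sigma Z)=\sigma^2\,\mathrm{Var}(Z)$ gives $\cR_1(\cA,\cD^+_{Y,u})=\sigma^2\,\mathrm{Var}(L(\cA,\cD^+_{X,u_0}))$. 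Assembling the two components according to \eqref{SRE-risk} yields \eqref{Th-risk}.

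I do not expect a genuine obstacle: the only points requiring (minor) care are that $\sigma>0$ is really used to pull the constant out of the positive part, and that exchanging $\int_{\cA}$ with $\bE$ in the displays for $\cR_0$ and $\cR_1$ is legitimate --- this follows from Tonelli's theorem since $\cD^+_{X,u_0}\ge 0$ and $\bE[(X(s)-u_0)^+]<\infty$ for a standard Gaussian marginal. One may also remark that if, in addition, one wishes to invoke the explicit formulas of Proposition \ref{Pro-risk1} for $X$ at threshold $u_0$, one needs $u_0\ge 0$, i.e. $\mu\le u$; the scaling identity \eqref{Th-risk} itself holds with no such restriction.
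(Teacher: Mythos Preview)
Your proof is correct and follows essentially the same approach as the paper: both rest on the pointwise identity $(Y(s)-u)^+=\sigma(X(s)-u_0)^+$ and then pass to expectation and variance. Your version is slightly more streamlined in that you establish $L(\cA,\cD^+_{Y,u})=\sigma\,L(\cA,\cD^+_{X,u_0})$ once as an equality of random variables and read off both components, whereas the paper carries out the substitution separately inside the $\int_{\cA}$ and $\int_{\cA\times\cA}$ integrals defining $\cR_0$ and $\cR_1$.
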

\begin{proof}
{From the definition of $\cD^+_{Y,u}$, we have:}
\begin{equation}\label {Co-change1}
\begin{split}
\bE[L(\mathcal A,\mathcal{D}_{Y,u}^+)]=&\frac{1}{|\cA|}\int_{\cA}\bE(Y(s)-u)^+\mathrm{d}s\\
=&\frac{1}{|\cA|}\int_{\cA}\bE(\mu+\sigma X(s)-u)^+\mathrm{d}s\\
=&\frac{\sigma }{|\cA|}\int_{\cA}\bE\big(X(s)-(\frac{u-\mu}{\sigma })\big)^+\mathrm{d}s\\
=&\sigma\bE[L(\mathcal A,\mathcal{D}_{X,u_0}^+)]\/.
\end{split}
\end{equation}
{On one other hand,}
\begin{gather*}
\mathrm{Var}\big(L(\mathcal A,\mathcal{D}_{Y,u}^+)\big)=\frac{1}{|\cA|^2}\int_{\cA\times\cA}\bE\big[\cD^+_{Y,u}(s)\cD^+_{Y,u}(t)\big]-\bE\big[\cD^+_{Y,u}(s)\big]\bE\big[\cD^+_{Y,u}(t)\big]\mathrm{d}s\mathrm{d}t\\
=\frac{1}{|\cA|^2}\int_{\cA\times\cA}\bE\big[(Y(s)-u)^+(Y(t)-u)^+\big]-\bE\big[(Y(s)-u)^+\big]\bE\big[(Y(t)-u)^+\big]\mathrm{d}s\mathrm{d}t\\
=\frac{1}{|\cA|^2}\int_{\cA\times\cA}\sigma^2\bE\big[(X(s)-u_0)^+(X(t)-u_0)^+\big]-\sigma^2\bE\big[(X(s)-u_0)^+\big]\bE\big[(X(t)-u_0)^+\big]\mathrm{d}s\mathrm{d}t\\
=\frac{\sigma^2}{|\cA|^2}\int_{\cA\times\cA}\bE\big[\cD^+_{X_s,u_0}(s)\cD^+_{X_t,u_0}(s)\big]-\bE\big[\cD^+_{X_s,u_0}(s)\big]\bE\big[\cD^+_{X_t,u_0}(s)\big]\mathrm{d}s.\mathrm{d}t\\
\end{gather*}
Therefore, 
\begin{equation}\label {Co-changevar}
\mathrm{Var}\big(L(\mathcal A,\mathcal{D}_{Y,u}^+)\big)=\sigma^2\mathrm{Var}\big(L(\mathcal A,\mathcal{D}_{X,u_0}^+)\big).
\end{equation}
\end{proof}
{Corollary \ref{Co-risk11} implies that without loss  generality, we may calculate the risk measure  for an isotropic standard Gaussian process, expressions for an isotropic non standard Gaussian process will follow. {Furthermore from these results we can see that $\cR_0(\mathcal A,\mathcal{D}_{Y,u}^+)$ does not depend on the region $\cA$ but only on the characteristics of the underlying Gaussian process. Then in the following study of the risk measure we will focus on the component $\cR_1(\mathcal A,\mathcal{D}_{Y,u}^+)$.}

The following Theorem is useful to compute the risk measure because it reduces to a one dimension integration.}
\begin{Th}\label{Th-shape}
Let $\X$ be an isotropic standard  Gaussian process. If the region $\mathcal{A} $ is either a disk or a square,  the expression $\mathrm{Var}\big(L(\mathcal{A},\mathcal{D}_{X,u}^+)\big)$ {reduces to a one dimensional integration.}\\
\textbf{When $\cA$ is a disk of radius $R$}
\begin{equation}\label{Th-disk}
\mathrm{Var}\big(L(\mathcal{A},\mathcal{D}_{X,u}^+)\big)=\int_{h=0}^{2R}\G(h,u) f_{disk}(h,R)\mathrm{d}h,
\end{equation}
where 
\begin{equation}\label{den2}
f_{disk}(h,R)=\frac{2h}{R^2}\bigg(\frac{2}{\pi}\mathrm{{arccos}}\big(\frac{h}{2R}\big)-\frac{h}{\pi R}\sqrt{1-\frac{h^2}{4R^2}}\bigg),
\end{equation}
{and $\G$ is defined in Equation~(\ref{Pro-covfun}).}

\textbf{When $\cA$ is a  square of side $R$}\\
\begin{equation}\label{Th-square}
\mathrm{Var}\big(L(\mathcal{A},\mathcal{D}_{X,u}^+)\big)=\int_{h=0}^{\sqrt{2}R}\G(h,u)f_{square}(h,R)\mathrm{d}h,
\end{equation}
where, for $h\in[0,R]$
\begin{equation*}
f_{square}(h,R)= \frac{2\pi h}{R^2}-\frac{8h^2}{R^3}+\frac{2h^3}{R^4}
\end{equation*}
and for $h\in[R,\sqrt{2}R]$,
\begin{equation}\label{square}
f_{square}(h,R)=\frac{2h}{R^2}\bigg[-2-b+3\sqrt{b-1}+\frac{b+1}{\sqrt{b-1}}+2\mathrm{arcsin}(\frac{2-b}{b})-\frac{4}{b\sqrt{1-\frac{(2-b)^2}{b^2}}}\bigg],
\end{equation}
where $b=\frac{h^2}{R^2}$.
\end{Th}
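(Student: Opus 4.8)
The plan is to reduce the double integral in~(\ref{Th-ex}) to a one–dimensional integral by exploiting that the covariance $\G$ of~(\ref{Pro-covfun}) depends on $s,t$ only through $h=\Vert s-t\Vert$. Setting $v=s-t$ and using Fubini's theorem,
\[
\int_{\mathcal A\times\mathcal A}\G(\Vert s-t\Vert,u)\,\mathrm ds\,\mathrm dt=\int_{\bR^2}\G(\Vert v\Vert,u)\,\gamma_{\mathcal A}(v)\,\mathrm dv ,\qquad \gamma_{\mathcal A}(v):=\bigl|\mathcal A\cap(\mathcal A+v)\bigr| ,
\]
where $\gamma_{\mathcal A}$ is the set covariance (geometric covariogram) of $\mathcal A$; equivalently $\cR_1(\mathcal A,\cD^+_{X,u})=\bE\bigl[\G(\Vert U-V\Vert,u)\bigr]$ for $U,V$ independent, uniform on $\mathcal A$, so that $f_{disk}$ (resp.\ $f_{square}$) will be recognised as the density of $\Vert U-V\Vert$. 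Thus it suffices to compute $\gamma_{\mathcal A}$ explicitly, pass to polar coordinates, and divide by $|\mathcal A|^2$.

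\textbf{Disk.} If $\mathcal A$ is a disk of radius $R$, then $\gamma_{\mathcal A}(v)$ depends only on $h=\Vert v\Vert$ and equals the area of the intersection of two disks of radius $R$ whose centres are at distance $h$; a standard circular–segment argument gives $\gamma_{\mathcal A}(v)=2R^2\arccos(h/2R)-\tfrac h2\sqrt{4R^2-h^2}$ for $0\le h\le 2R$, and $0$ otherwise. In polar coordinates $\mathrm dv=h\,\mathrm dh\,\mathrm d\theta$ the $\theta$–integration contributes a factor $2\pi$, so $\cR_1(\mathcal A,\cD^+_{X,u})=\int_0^{2R}\G(h,u)\,\tfrac{2h\gamma_{\mathcal A}(h)}{\pi R^4}\,\mathrm dh$; writing $\sqrt{4R^2-h^2}=2R\sqrt{1-h^2/4R^2}$ identifies $\tfrac{2h\gamma_{\mathcal A}(h)}{\pi R^4}$ with $f_{disk}(h,R)$ of~(\ref{den2}).

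\textbf{Square.} Taking $\mathcal A=[0,R]^2$ gives the elementary product form $\gamma_{\mathcal A}(v)=(R-|v_1|)^+(R-|v_2|)^+$. Since $\G(\Vert v\Vert,u)\,\gamma_{\mathcal A}(v)$ is even in $v_1$ and in $v_2$, the integral over $\bR^2$ is four times that over the first quadrant; in polar coordinates $v=(h\cos\theta,h\sin\theta)$, $\theta\in[0,\pi/2]$, for fixed $h$ the two factors are simultaneously positive on all of $[0,\pi/2]$ when $h\le R$, on $[\arccos(R/h),\arcsin(R/h)]$ when $R\le h\le\sqrt2\,R$, and on the empty set for $h>\sqrt2\,R$. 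Evaluating $\int(R-h\cos\theta)(R-h\sin\theta)\,\mathrm d\theta$ over the relevant interval with $\int\cos=\sin$, $\int\sin=-\cos$, $\int\sin\cos=\tfrac12\sin^2$, multiplying by $4h/R^4$ and putting $b=h^2/R^2$: the regime $h\le R$ yields directly $\tfrac{2\pi h}{R^2}-\tfrac{8h^2}{R^3}+\tfrac{2h^3}{R^4}$, while the regime $R\le h\le\sqrt2\,R$ yields $\tfrac{2h}{R^2}\bigl(\pi-2-b+4\sqrt{b-1}-4\arccos(1/\sqrt b)\bigr)$. It remains to check that this equals~(\ref{square}): since $1-(2-b)^2/b^2=4(b-1)/b^2$ one gets $\tfrac{4}{b\sqrt{1-(2-b)^2/b^2}}=\tfrac{2}{\sqrt{b-1}}$ and $3\sqrt{b-1}+\tfrac{b+1}{\sqrt{b-1}}-\tfrac{2}{\sqrt{b-1}}=4\sqrt{b-1}$, while $\cos\bigl(2\arccos(1/\sqrt b)\bigr)=2/b-1=(2-b)/b$ together with $\arccos=\tfrac\pi2-\arcsin$ gives $\pi-4\arccos(1/\sqrt b)=2\arcsin\bigl((2-b)/b\bigr)$; combining these turns the two expressions into one another.

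The disk case and the first regime of the square are routine. The main obstacle is the second regime $h\in[R,\sqrt2\,R]$ of the square, where the integration produces an expression of a visibly different shape that must be coerced into the precise form~(\ref{square}) through the half-angle and $\arccos$–$\arcsin$ identities above. One should also justify the two covariogram formulas themselves — the product form for the square is immediate from Fubini, whereas the disk formula requires the short circular-segment computation — and note in passing that integrating $\gamma_{\mathcal A}$ over $\bR^2$ recovers $|\mathcal A|^2$, so that $f_{disk}$ and $f_{square}$ are indeed probability densities.
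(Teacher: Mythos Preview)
Your proof is correct and rests on the same core observation as the paper: writing $\cR_1(\cA,\cD^+_{X,u})=\bE\bigl[\G(\Vert S-T\Vert,u)\bigr]$ for $S,T$ independent and uniform on $\cA$, so that the four-dimensional integral collapses to a one-dimensional integral against the density of $\Vert S-T\Vert$. The paper stops there and simply quotes the densities $f_{disk}$ and $f_{square}$ from \cite{moltchanov2012distance}, whereas you derive them from scratch via the set covariogram $\gamma_{\cA}(v)=|\cA\cap(\cA+v)|$, compute the polar integrals explicitly, and then carry out the trigonometric simplifications (the half-angle and $\arccos$--$\arcsin$ identities) needed to match your expression for the regime $h\in[R,\sqrt2\,R]$ with the paper's form~(\ref{square}). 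This buys you a self-contained argument that does not depend on an external reference and, as a bonus, verifies that the somewhat baroque expression~(\ref{square}) is consistent with the cleaner form $\tfrac{2h}{R^2}\bigl(\pi-2-b+4\sqrt{b-1}-4\arccos(1/\sqrt b)\bigr)$; the paper's citation-based proof is shorter but offers no such check.
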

\begin{proof} {The strategy of proof is the one adopted in \cite{koch2015spatial} for some max-stable processes.}\\
Let $S$ and $T$ be two independent random variables uniformly distributed on $\cA$. For any function $\gamma$ defined on $\bR^+$, we have  
$$\bE\big[\gamma(||S-T||)\big]=\frac{1}{|\cA|^2}\int_{\cA\times\cA}\gamma(||s-t||)\mathrm{d}s\mathrm{d}t\/.$$
{Using \cite{moltchanov2012distance} if $\cA$ is a  square of side $R$,}
\begin{equation}\label{Th-shape3}
\bE\big[\gamma(||S-T||)\big]=\int_{h=0}^{\sqrt{2}R}\gamma(h)f_{square}(h,R)\mathrm{d}h,
\end{equation}
with $f_{square}$ given by Equation (\ref{square}). {If $\cA$ is a disk of radius $R$ then}
\begin{equation}\label{Th-shape4}
\bE\big[\gamma(||S-T||)\big]=\int_{h=0}^{2R}\gamma(h)f_{disk}(h,R)\mathrm{d}h.
\end{equation}
 Moreover, {by (\ref{Th-ex})}
 $$ \mathrm{Var}\big(L(\mathcal{A},\mathcal{D}_{X,u}^+)\big)=\bE\big[\G(||S-T||,u)\big]\/.$$
 Using { (\ref{Th-shape3}) and (\ref{Th-shape4}) with the function $\gamma(h)=\mathcal{G}(h,u)$ we obtain the result.} 
\end{proof}
In {what follows, we write our results for square regions $\cA$, but the results hold for disks as well.}
\subsection{Behavior  of {$\cR_1(\lambda\cA,\cD^+_{X,u})$} with respect to $\lambda$}
The {following expression of  {$\cR_1(\lambda\cA,\cD^+_{X,u})$}  is a keystone to understand its behavior.}  
\begin{Lem}\label{homo}
Let {$\lambda\geq 0$ and $\cA$ be a square}  of side $R$, then
\begin{equation}\label{Pro-homo4}
\cR_1(\lambda\cA,\cD^+_{X,u})=\int_{h=0}^{\sqrt{2}R}f_{square}(h,R)\G(\lambda h,u)\mathrm{d}h.
\end{equation} 
\end{Lem}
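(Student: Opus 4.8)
The plan is to reduce everything to Theorem~\ref{Th-shape} applied to the square $\lambda\cA$, which has side $\lambda R$, and then to rescale. First I would record the elementary homogeneity of the density $f_{square}$: from the explicit formulas in Theorem~\ref{Th-shape}, for every $\mu>0$ one has $f_{square}(\mu h,\mu R)=\mu^{-1}f_{square}(h,R)$. Indeed, on $h\in[0,R]$ each term $\tfrac{2\pi h}{R^2}$, $\tfrac{8h^2}{R^3}$, $\tfrac{2h^3}{R^4}$ is jointly homogeneous of degree $-1$ in $(h,R)$, and on $h\in[R,\sqrt2 R]$ the bracket in (\ref{square}) depends only on the scale-invariant ratio $b=h^2/R^2$ while the prefactor $\tfrac{2h}{R^2}$ is again homogeneous of degree $-1$. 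Equivalently, and more conceptually, if $S,T$ are independent and uniform on $\cA$ then $\lambda S,\lambda T$ are independent and uniform on $\lambda\cA$ and $\|\lambda S-\lambda T\|=\lambda\|S-T\|$, which is exactly the probabilistic shadow of this homogeneity.

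Next, applying Theorem~\ref{Th-shape} (the square case, formula (\ref{Th-square})) to $\lambda\cA$ gives
$$\cR_1(\lambda\cA,\cD^+_{X,u})=\int_{h=0}^{\sqrt2\,\lambda R}\G(h,u)\,f_{square}(h,\lambda R)\,\mathrm{d}h.$$
I would then substitute $h=\lambda g$, $\mathrm{d}h=\lambda\,\mathrm{d}g$, so that the upper limit becomes $\sqrt2 R$ and, using $f_{square}(\lambda g,\lambda R)=\lambda^{-1}f_{square}(g,R)$,
$$\cR_1(\lambda\cA,\cD^+_{X,u})=\int_{g=0}^{\sqrt2 R}\G(\lambda g,u)\,\lambda^{-1}f_{square}(g,R)\,\lambda\,\mathrm{d}g=\int_{g=0}^{\sqrt2 R}f_{square}(g,R)\,\G(\lambda g,u)\,\mathrm{d}g,$$
which is the claimed identity after renaming $g$ as $h$. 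An equivalent route that avoids the explicit rescaling is to start from (\ref{Th-ex}) and the change of variables $s\mapsto\lambda s$, $t\mapsto\lambda t$ (using $|\lambda\cA|=\lambda^2|\cA|$) to get $\cR_1(\lambda\cA,\cD^+_{X,u})=\bE[\G(\lambda\|S-T\|,u)]$ with $S,T$ uniform on the fixed square $\cA$, and then invoke (\ref{Th-shape3}) with the test function $\gamma(\cdot)=\G(\lambda\,\cdot\,,u)$, which is admissible since it is a function on $\bR^+$.

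The degenerate case $\lambda=0$ (where $\lambda\cA$ is a single point) is consistent: the left-hand side equals $\mathrm{Var}\big((X(0)-u)^+\big)=\G(0,u)$ and the right-hand side is $\G(0,u)\int_{0}^{\sqrt2 R}f_{square}(h,R)\,\mathrm{d}h=\G(0,u)$, since $f_{square}(\cdot,R)$ is a probability density. I do not expect any genuine obstacle here; the only point requiring a little care is the Jacobian/normalization bookkeeping in the rescaling step (equivalently, the degree $-1$ homogeneity of $f_{square}$), with everything else being a direct application of Theorem~\ref{Th-shape}.
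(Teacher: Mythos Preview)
Your proof is correct and follows essentially the same approach as the paper: apply Theorem~\ref{Th-shape} to the square $\lambda\cA$ of side $\lambda R$, then change variables $h\mapsto\lambda h$ and use the degree~$-1$ homogeneity $f_{square}(\lambda h,\lambda R)=\lambda^{-1}f_{square}(h,R)$. Your additional remarks (explicit verification of the homogeneity, the probabilistic rephrasing via $\lambda S,\lambda T$, and the consistency check at $\lambda=0$) are nice touches but not needed for the argument.
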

\begin{proof}
{Theorem \ref{Th-shape} gives:}
{\begin{eqnarray*}
\cR_1(\lambda\cA,\cD^+_{X,u})=\mathrm{Var}\big(L(\lambda\mathcal A,\mathcal{D}_{X,u}^+)\big)&=&\int_{h=0}^{\sqrt{2}\lambda R}f_{square}(h,\lambda R)\G(h,u)\mathrm{d}h.\\
&=&\int_{h=0}^{\sqrt{2}R}f_{square}(\lambda h,\lambda R)\G(\lambda h,u)\lambda\mathrm{d}h.
\end{eqnarray*}
}
{Remark} that $f_{square}(\lambda h,\lambda R)=\lambda^{-1}f_{square}(h,R)$. {Thus,}
$$\cR_1(\lambda\cA,\cD^+_{X,u})=\int_{h=0}^{\sqrt{2}R}f_{square}( h,R)\G(\lambda h,u)\mathrm{d}h\/.$$
{The same calculations would give the same result if $\cA$ is a disk of radius $R$ (by replacing $f_{square}$ by $f_{disk}$).}
\end{proof}
Lemma \ref{homo} gives the following two results on the behavior of the mapping {$\lambda\mapsto\cR_1(\lambda\cA,\cD^+_{X,u})$.}
\begin{Co}\label{Prop-dec}
Let $X$ be an isotropic standard Gaussian process {on $\ss\subset\bR^2$} with auto-correlation function $\rho$. {Let {$\cA\subset\ss$} be either a disk or a square}. The mapping {$\lambda\mapsto\cR_1(\lambda\cA,\cD^+_{X,u})$} is non-increasing if and only if $h\mapsto\rho(h)$, $h>0$ is non-increasing {and non-negative} . 
\end{Co}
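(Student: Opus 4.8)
The plan is to reduce everything to a monotonicity property of the scalar function obtained by freezing the correlation in~(\ref{Pro-covfun}): for $r\in[-1,1]$ let $g(r)$ be the right-hand side of~(\ref{Pro-covfun}) with $\rho(h)$ replaced by $r$, so that $\G(h,u)=g(\rho(h))$ and, probabilistically, $g(r)=\mathrm{Cov}\big((Z_1-u)^+,(Z_2-u)^+\big)$ for a standard bivariate normal pair $(Z_1,Z_2)$ with correlation $r$. By Corollary~\ref{Co-risk11} it suffices to treat the standard process $X$, and by Lemma~\ref{homo} (through Theorem~\ref{Th-shape}), for a square $\cA$ of side $R$,
\[
\cR_1(\lambda\cA,\cD^+_{X,u})=\int_{0}^{\sqrt2 R}f_{square}(h,R)\,g\big(\rho(\lambda h)\big)\,\mathrm{d}h ,
\]
with $f_{square}(\cdot,R)\ge 0$ and strictly positive on $(0,\sqrt2 R)$ (the disk case is identical, with $f_{disk}$ in place of $f_{square}$).

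The technical heart is the claim that $g$ is strictly increasing on $(-1,1)$. I would prove it by differentiation under the integral sign: writing $f_r$ for the correlation-$r$ standard bivariate normal density, one has the Price-type identity $\partial_r f_r=\partial_x\partial_y f_r$; inserting it into $\bE\big[(Z_1-u)^+(Z_2-u)^+\big]$ and integrating by parts twice (the boundary terms vanishing by Gaussian decay, the weak derivative of $(x-u)^+$ being the bounded function $\mathbf{1}_{\{x>u\}}$) gives
\[
g'(r)=\bE\big[\mathbf{1}_{\{Z_1>u\}}\mathbf{1}_{\{Z_2>u\}}\big]=\ell(u,u,r)>0\qquad(r\in(-1,1)),
\]
the constant term $\big(\varphi(u)-u\overline{\Phi}(u)\big)^2$ dropping out. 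Granting this, monotonicity transfers through the composition $\G(\cdot,u)=g\circ\rho$: since $g$ is non-decreasing, $\rho$ non-increasing forces $\G(\cdot,u)$ non-increasing; since $g$ is strictly increasing, $\G(\cdot,u)$ non-increasing forces $\rho$ non-increasing.

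For the ``if'' part, assume $\rho$ is non-increasing on $(0,\infty)$. Then $\G(\cdot,u)$ is non-increasing, so $\lambda\mapsto\G(\lambda h,u)$ is non-increasing for each fixed $h\ge0$, and integrating against $f_{square}(\cdot,R)\ge0$ makes $\lambda\mapsto\cR_1(\lambda\cA,\cD^+_{X,u})$ non-increasing. (The hypothesis $\rho\ge0$ is not used here, and is in fact automatic: for an isotropic correlation on $\bR^2$, Schoenberg's representation $\rho(r)=\int_{[0,\infty)}J_0(rt)\,\mu(\mathrm{d}t)$ and dominated convergence give $\rho(r)\ge\lim_{s\to\infty}\rho(s)=\mu(\{0\})\ge0$ once $\rho$ is non-increasing.) For the ``only if'' part, suppose $\lambda\mapsto\cR_1(\lambda\cA,\cD^+_{X,u})$ is non-increasing. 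Using the scaling $f_{square}(\lambda h,\lambda R)=\lambda^{-1}f_{square}(h,R)$, rewrite it as $\int_{0}^{\sqrt2}\tilde f(v)\,\G(\lambda R v,u)\,\mathrm{d}v$ with the fixed density $\tilde f(v)=R\,f_{square}(Rv,R)$, positive on $(0,\sqrt2)$; I would then argue that if $\G(\cdot,u)$ failed to be non-increasing, say $\G(r_1,u)<\G(r_2,u)$ for some $r_1<r_2$, then continuity of $\rho$ together with the strict positivity of $\tilde f$ would let one compare two scaling parameters $\lambda_1<\lambda_2$ with $\cR_1(\lambda_1\cA,\cD^+_{X,u})<\cR_1(\lambda_2\cA,\cD^+_{X,u})$, a contradiction. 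Hence $\G(\cdot,u)$ is non-increasing, whence $\rho$ is non-increasing, and then $\rho\ge0$ either by the Schoenberg argument above or directly: if $\rho(h_0)<0$ then $\G(\lambda h,u)\le g(\rho(h_0))<0$ for all $h\ge h_0/\lambda$, which forces $\cR_1(\lambda\cA,\cD^+_{X,u})<0$ for $\lambda$ large, impossible since it is a variance.

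The two steps I expect to be the main obstacles are: (a) the rigorous identity $g'(r)=\ell(u,u,r)$, i.e.\ the strict monotonicity of $g$ --- differentiating a truncated bivariate Gaussian integral through the kink of $(\cdot-u)^+$ and controlling the degeneracy as $r\to\pm1$; this is standard but fiddly. And (b), the genuine difficulty, the converse transfer of monotonicity: deducing that $\G(\cdot,u)$ is non-increasing from the sole fact that its scale-invariant average against the fixed, non-negative, non-concentrating kernel $f_{square}$ (resp.\ $f_{disk}$) is non-increasing in the scaling parameter. Step~(b) is where the explicit polynomial form of $f_{square}$ and $f_{disk}$ would have to be exploited.
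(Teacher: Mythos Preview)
Your argument for the ``if'' direction is correct and in fact considerably more careful than the paper's own proof, which consists of a single sentence: it simply asserts that ``by its definition, for any $h>0$, the function $\lambda \mapsto \G(\lambda h,u)$ is non-increasing provided the auto-correlation function is non-negative and non-increasing.'' Your identification of the Price/Plackett identity $g'(r)=\ell(u,u,r)>0$ is exactly what is needed to justify that claim, since the monotonicity of $\G(h,u)=g(\rho(h))$ in $\rho(h)$ is not at all transparent from the explicit formula~(\ref{Pro-covfun}). Your Schoenberg remark --- that for an isotropic correlation on $\bR^2$, non-increasing already forces non-negative --- is a genuine observation that the paper does not make; it shows the non-negativity hypothesis is redundant on the ``if'' side.

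As for the ``only if'' direction and your step~(b): the paper does not prove it. The entire proof in the paper is the one sentence quoted above, which addresses only the forward implication. So the difficulty you have correctly flagged --- inferring monotonicity of $\G(\cdot,u)$, and hence of $\rho$, from monotonicity of its scale averages against $f_{square}$ (or $f_{disk}$) --- is simply not resolved in the paper either, and the ``if and only if'' as stated should probably be read as a loose formulation rather than a proved equivalence. You are right that this converse does not follow from the tools assembled: a local bump in $\rho$ on a short interval could in principle be smoothed out by the kernel $f_{square}$ without destroying the monotonicity of $\lambda\mapsto\cR_1(\lambda\cA,\cD^+_{X,u})$, and ruling this out would require an argument that neither you nor the paper provides. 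In short: your ``if'' is complete and sharper than the paper's; your hesitation on the ``only if'' is warranted, and the paper offers nothing further there.
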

\begin{proof}
{It suffices to remark that by its definition, for any $h>0$, the function $\lambda \mapsto \G( \lambda h\/,u)$ is non-increasing provided the auto-correlation function is {non-negative} and   non-increasing.}
\end{proof}

\begin{Co}\label{Th-AIG}
Let $\X$ be an isotropic  standard Gaussian process with auto-correlation function satisfying {decreasing to $0$ as $h$ goes to infinity.} Then, for $\cA$ either a disk or a square, {we have}
{\begin{equation}\label{Th-AIR}
\lim_{\lambda\to\infty}\cR_1(\lambda\cA,\cD^+_{X,u})=0.
\end{equation}}
\end{Co}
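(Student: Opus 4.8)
The plan is to invoke Lemma~\ref{homo} and pass to the limit $\lambda\to\infty$ inside the integral by dominated convergence. Taking $\cA$ to be a square of side $R$ (the disk case is handled identically, with $f_{disk}$ and the upper limit $2R$ replacing $f_{square}$ and $\sqrt2R$), Lemma~\ref{homo} gives
\begin{equation*}
\cR_1(\lambda\cA,\cD^+_{X,u})=\int_{h=0}^{\sqrt{2}R}f_{square}(h,R)\,\G(\lambda h,u)\,\mathrm{d}h,
\end{equation*}
so it suffices to establish a pointwise limit and a uniform domination for the integrand.

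For the pointwise limit I would fix $h>0$ and use that $\rho(\lambda h)\to0$ as $\lambda\to\infty$. The map $w\mapsto\ell(u,u,w)$ is continuous on $(-1,1)$ — it is the orthant probability $\bP(N_1>u,\,N_2>u)$ for a standard bivariate normal pair of correlation $w$, and continuity at $w=0$ follows by dominated convergence applied to the integral in (\ref{Th-trun}) — and the remaining terms in (\ref{Pro-covfun}) depend continuously on $\rho(h)$ near $0$. Hence $\G(\lambda h,u)\to\G_0(u)$, the value of the right-hand side of (\ref{Pro-covfun}) at $\rho(h)=0$. Substituting $\rho(h)=0$, using $\ell(u,u,0)=\overline\Phi(u)^2$ and $\overline\Phi\big(u(1-0)/(1-0)^{1/2}\big)=\overline\Phi(u)$, the expression telescopes,
\begin{equation*}
\G_0(u)=u^2\overline\Phi(u)^2-2u\varphi(u)\overline\Phi(u)+\varphi(u)^2-\big(\varphi(u)-u\overline\Phi(u)\big)^2=0,
\end{equation*}
which is exactly the expected fact that the excess damages at $s$ and $s+h$ become uncorrelated as $\rho(h)\to0$. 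Thus $\G(\lambda h,u)\to0$ for every $h>0$, hence for a.e.\ $h\in[0,\sqrt2R]$.

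For the domination I would use that by construction $\G(h,u)=\mathrm{Cov}\big(\cD^+_{X,u}(s),\cD^+_{X,u}(s+h)\big)$, so by the Cauchy--Schwarz inequality and stationarity $|\G(\lambda h,u)|\le\mathrm{Var}\big(\cD^+_{X,u}(0)\big)=:C_u$, a finite constant independent of $h$ and $\lambda$ (indeed $C_u\le\bE[Z^2]=1$ for $Z$ standard normal and $u\ge0$). Since $f_{square}(\cdot,R)\ge0$ is a probability density on $[0,\sqrt2R]$ (it is the density of $\|S-T\|$ from the proof of Theorem~\ref{Th-shape}), the map $h\mapsto C_u\,f_{square}(h,R)$ is integrable and dominates $\big|f_{square}(h,R)\,\G(\lambda h,u)\big|$ for all $\lambda$. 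The dominated convergence theorem then yields $\cR_1(\lambda\cA,\cD^+_{X,u})\to0$, which is (\ref{Th-AIR}).

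The only step requiring a little care is the continuity of $w\mapsto\ell(u,u,w)$ at $w=0$ used for the pointwise convergence; everything else is routine. One could instead bound $|\G(\lambda h,u)-\G_0(u)|$ directly in terms of $\rho(\lambda h)$ via a modulus-of-continuity estimate, bypassing that remark, but the dominated-convergence route above is the cleanest.
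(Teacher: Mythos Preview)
Your proof is correct and follows essentially the same route as the paper: apply Lemma~\ref{homo}, interchange limit and integral, and verify that $\G(\lambda h,u)\to 0$ pointwise using $\rho(\lambda h)\to 0$ and $\ell(u,u,0)=\overline\Phi(u)^2$. The only (minor) difference is that the paper justifies the interchange via the monotone convergence theorem, using that $\lambda\mapsto\G(\lambda h,u)$ is non-increasing when $\rho$ is non-negative and non-increasing, whereas you use dominated convergence with the Cauchy--Schwarz bound $|\G|\le\mathrm{Var}\big(\cD^+_{X,u}(0)\big)$; your version is arguably cleaner since it does not rely on the monotonicity of $\G$ in $\rho$.
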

\begin{proof}
{Let $ \cA$ be a square of side $ R$,}
\begin{equation}\label{Th-AIG1}
\cR_1(\lambda\cA,\cD^+_{X,u})=\int_{h=0}^{\sqrt{2}R}f_{square}(h,R)\G(\lambda h,u)\mathrm{d}h\/,
\end{equation}
the monotonic convergence theorem gives:
\begin{equation}\label{Th-AIG2}
\lim_{\lambda\to\infty} \cR_1(\lambda\cA,\cD^+_{X,u})=\int_{h=0}^{\sqrt{2}R}f_{square}(h,R)\lim_{\lambda\to\infty}\G(\lambda h,u)\mathrm{d}h.
\end{equation}
{Since $\rho(h)$ goes to $0$ as $h$ goes to infinity  
$$\lim_{\lambda\to\infty}\G(\lambda h,u)=u^2\ell\big(u,u,0\big)-u^2\overline{\Phi}^2(u)\/$$
and the result follows.}
\end{proof}
{We finish this section with the remark that Lemma \ref{homo} implies the anti-monotonicity for regions $\cA_1$, $\cA_2$ which are either a disk or a square. 
\begin{Prop}\label{anti}
Let $\X$ be an isotropic standard Gaussian process with non-negative and non-increasing auto-correlation function, let $\cA_1$, $\cA_2$ be either squares or disks such that $|\cA_1|\leq |\cA_2|$ then 
{$$\cR_1(\lambda\cA_2,\cD^+_{X,u})\leq \cR_1(\lambda\cA_1,\cD^+_{X,u})\/.$$}
\end{Prop}
\begin{proof}
Let us do the proof in the square case. By invariance by translation, we may assume $\cA_1=\lambda \cA_2$ for some $\lambda\leq1$. Equation (\ref{Pro-homo4}) gives the result. 
\end{proof}
We now perform some simulation study for various shapes of auto-correlation functions.
}
 \section{Simulation study}\label{R:5}
{In this section, we study}   the behavior {of} the proposed spatial risk measure $\cR(\cA,\cD^+_{X,u})$,  {through some simulations}. 
 \subsection{Analysis of $\mathcal{G}(h,u)$ and $\cR_1(\lambda\cA,\cD^+_{X,u})$ }\label{corre}
{We begin this simulation section with the study of the covariance damage function $\mathcal{G}$ which plays a central role in the behavior of $\cR(\cA,\cD^+_{X,u})$. Following \cite{abrahamsen1997review}, we consider five Gaussian models {depending on the choice of the correlation structure, more precisely,}
\begin{enumerate}
  \item Spherical correlation function:
        \begin{equation*}
          \rho^{sph}_{\theta}(h)=\bigg[1-1.5\bigg(\frac{h}{\theta}\bigg)+0.5\bigg(\frac{h}{\theta}\bigg)^3\bigg]\mathds{1}_{\{h>\theta\}}.
        \end{equation*}
  \item Cubic correlation function :
        \begin{equation*}
          \rho^{cub}_{\theta}(h)=\bigg[1-7\bigg(\frac{h}{\theta}\bigg)+\frac{35}{2}\bigg(\frac{h}{\theta}\bigg)^2-\frac{7}{2}\bigg(\frac{h}{\theta}\bigg)^5+\frac{3}{5}\bigg(\frac{h}{\theta}\bigg)^7\bigg]\mathds{1}_{\{h>\theta\}}.
        \end{equation*}
  \item { Exponential correlation functions:
        \begin{equation*}
          \rho^{exp}_{\theta}(h)=\exp\big[-\frac{h}{\theta}\big],
        \end{equation*}}
 \item { Gaussian correlation functions:
        \begin{equation*}
           \rho^{gau}_{\theta}(h)=\exp\big[-\big(\frac{h}{\theta}\big)^2\big];
        \end{equation*}}
 \item {Mat\'ern} correlation function: 
\begin{equation*}
 \rho^{mat}(h)=\frac{1}{\Gamma(\kappa)2^{\kappa-1}}( h/\theta)^{\kappa}K_\kappa(h/\theta)\/,
\end{equation*}
\end{enumerate}
where $\Gamma$ is the gamma function, $K_{\kappa}$ is the modified Bessel function of second kind and order $\kappa>0$, $\kappa$ is a smoothness parameter and $\theta$ is a scaling parameter.

{In order to emphasize the dependence of the damage covariance function $\mathcal{G}$ to the correlation parameter we will denote it by $\mathcal{G}_{\theta}(h,u)$ for any triplet $(h,u,\theta)$.}

\begin{figure}[h]
\centering
\fboxrule=1pt 
\fbox{\includegraphics[width=13cm]{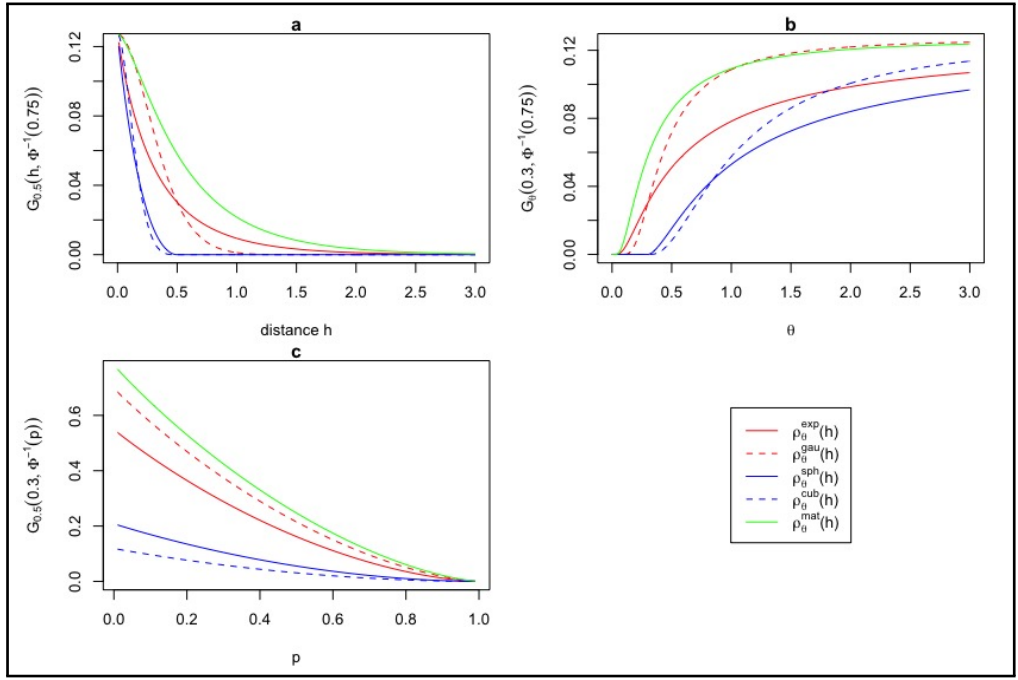}}
\caption{ {Behavior of $\mathcal{G}_{\theta}(h,u)$ with respect to the threshold $u$, the correlation length $\theta$ and  the distance $h$. Five non-negative correlation functions (exponential, Gaussian, spherical, cubic {and Mat\'ern with $\kappa=1$})  have been examined. The graphs (a), (b) and (c) show the behavior of $\mathcal{G}_\cdot(\cdot,\cdot)$ with respect to: (a)  the distance $h$, when $u=\Phi^{-1}(0.75)$ and $\theta=0.50$; (b) the correlation length $\theta$, when $u=\Phi^{-1}(0.75)$ and $h=0.30$; (c) the threshold $u=\Phi^{-1}(p),p\in[0,1]$, when $\theta=0.50$ and $h=0.30$.}}
 \label{GauF1}
\end{figure}
{ Figure \ref{GauF1}.(a) shows the behavior of the spatial covariance between two damage functions $\cD_{X,u}^+(\cdot)$ and $\cD_{X,u}^+(\cdot+h)$ with respect to the distance $h$, when the correlation length is set to $\theta=0.50$ and the threshold to $u=\Phi^{-1}(0.75)$,  where $\Phi^{-1}$ is  the quantile function of the standard normal distribution. It shows that $\mathcal{G}_{\theta}(h,u)$ tends  to 0 as $h$ tends to infinity { with different decreasing speed.}} {This is obviously the expected behavior, because the process $(\cD_{X,u}^+(s), s\in\mathbb{S})$ is (spatially) asymptotically independent. Whereas, for spherical and cubic correlation functions, $\mathcal{G}_\theta(h,u)=0$ as soon as $h>\theta$, which means that the process $(\cD_{X,u}^+(s),s \in\mathbb{S})$ is  $\theta$-independent (independent at distance larger that $\theta$).  
}

{ In order to study the behavior of the damage covariance function with respect to $\theta$, we set the threshold $u=\Phi^{-1}(0.75)$ and the distance $h=0.30$. In Figure \ref{GauF1}.(b) we remark that  $\mathcal{G}_\theta(h,u)$ is increasing {with $\theta$.} 
 \\}
{ Finally, {we study} the behavior of the damage covariance function with respect to the threshold $u=\Phi^{-1}(p),p\in[0,1]$. We set $\theta=0.50$ and $h=0.30$. {Remark} (see Figure~\ref{GauF1}.(c)) {that  even if $h$ is small, $\mathcal{G}_\theta(h,\Phi^{-1}(p))$  goes to zero as $p$ goes to 1, so that it will be difficult to approximate correctly  the covariance damage function when $u$ is large.}
 }

\begin{figure}[h]
\centering
\fboxrule=1pt 
\fbox{\includegraphics[width=13cm]{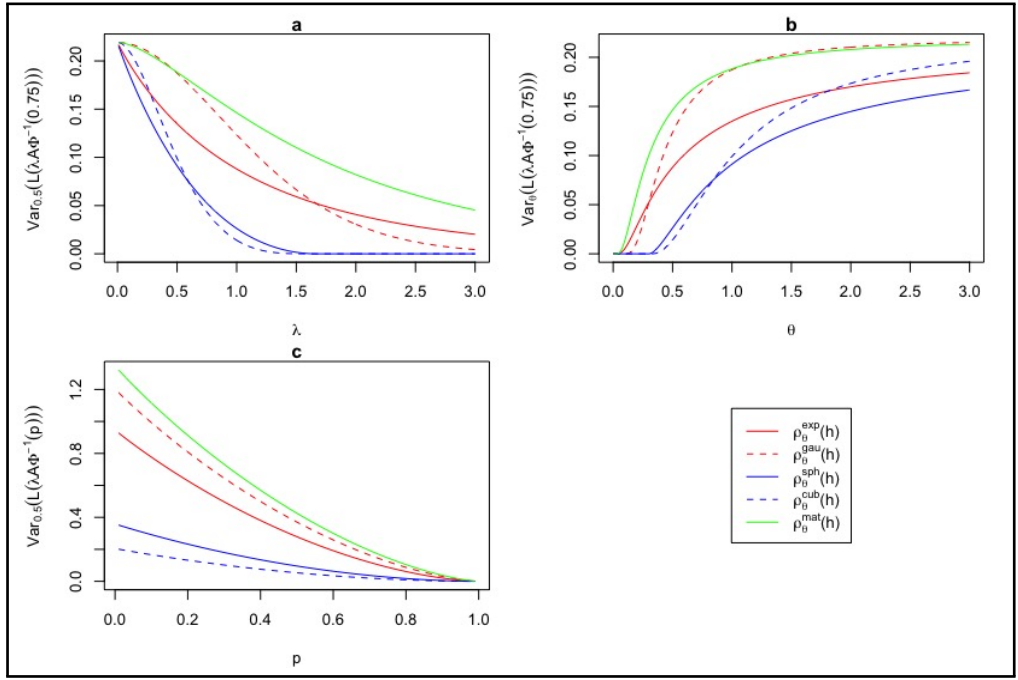}}
\caption{The behavior of {$\cR_1(\lambda\cA,\cD^+_{X,u})$} {for}  $\cA=[0,1]^2$. Exponential, Gaussian, spherical, cubic and Mat\'ern with $\kappa=1$ non-negative correlation functions. The graphs (a), (b) and (c) show the  behavior of {$\cR_1(\lambda\cA,\cD^+_{X,u})$} for a fixed $h=0.30$ with respect to : (a)  $\lambda$, when  $u=\Phi^{-1}(0.75)$ and $\theta=0.50$; (b)  $\theta$, when $u=\Phi^{-1}(0.75)$ and $\lambda=1$; (c)  $u=\Phi^{-1}(p),p\in[0,1]$, when $\lambda=1$ and $\theta=0.50$. \label{GauF2}}
\end{figure}

{Figure~\ref{GauF2}. focusses on the behavior of {$\cR_1(\lambda\cA,\cD^+_{X,u})$} with respect to  $(\lambda,u,\theta)$, when $\cA$ is a square of side $R=1$. In order to see the influence of the homothety rate $\lambda$, we set  $u=\Phi^{-1}(0.75)$ and $\theta=0.50$. 
}

{To tackle the behavior with respect to $\theta$ we choose  $\lambda=1$ and $u=\Phi^{-1}(0.75)$. 
}

{To study the behavior of the variance with respect to the threshold $u=\Phi^{-1}(p),p\in[0,1]$, we set $\lambda=1$ and $\theta=0.50$. 
} 

 \subsection{Numerical computation}
 We generated {isotropic} standard spatial Gaussian {processes} $X$ {on $\ss=\bR^2$} with different non-negative correlation functions (exponential, Gaussian, spherical, cubic and Mat\'ern with $\kappa=1$) for $\theta=0.5$.  The process $X$ is simulated on a $(15\times 15)$ {irregular} grid with $n=125$ locations over {$\cA=[0,1]^2$}.
We set the threshold $u=\Phi^{-1}(p)$, for $p:=\{0.75,0.85,0.95\}$.\\
\ \\
{This section is devoted to a numerical study of the computation of {$\cR_1(\lambda\cA,\cD^+_{X,u})$}, where $\cA=[0,1]^2$. We compare the computation of {$\cR_1(\lambda\cA,\cD^+_{X,u})$} by the one dimensional integration using (\ref{Th-ex}) with the {intuitive} Monte-Carlo computation (M1).  The (M1) computation is obtained by  generating a $m=1000$ sample of $X$ on the grid. That is,}

\begin{equation}
L_j(\mathcal{A},\mathcal{D}_{X,u}^+)=\frac{1}{|\mathcal{A}|}\bigg[\frac{1}{n-1}\bigg]^2\sum_{i=1}^{n-1}{(X(s_{ij})-u)^+}\quad j=1,...,m.
\end{equation}
\begin{equation}
{\cR_0^{M1}(\lambda\cA,\cD^+_{X,u})}=\frac{1}{m}\sum_{j=1}^{m}L_j(\mathcal{A},\mathcal{D}_{X,u}^+)
\end{equation}
and 
\begin{equation}\label{simulation}
{\cR_1^{M1}(\lambda\cA,\cD^+_{X,u})}=\mathrm{Var}(L_j(\mathcal{A},\mathcal{D}_{X,u}^+)).
\end{equation}
{Boxplots in Figure \ref{GauF3} represent  the relative errors over $100$ (M1) computations  with respect to the one dimensional integration. } 
\begin{figure}[h]
\centering
\fboxrule=1pt 
\fbox{\includegraphics[width=13cm]{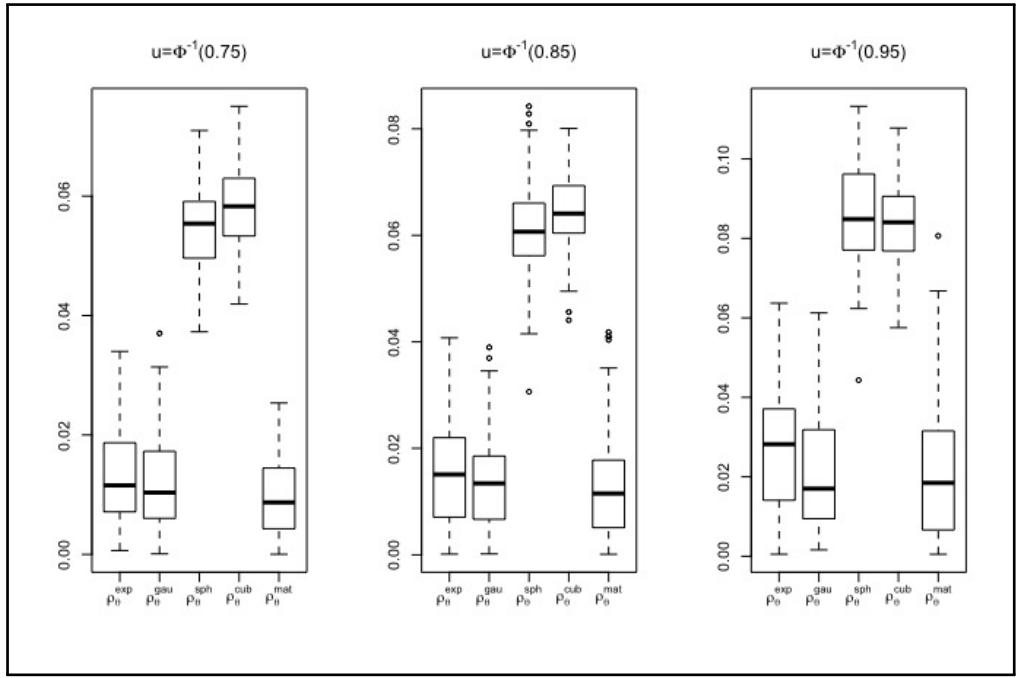}}
\caption{ The boxplots represent the relative errors of $\cR_1(\lambda\cA,\cD^+_{X,u})$ between {the one dimensional integration computation and the M1  method for different thresholds} $u=\Phi^{-1}(p)$, $p:=\{0.75,0.85,0.95\}$ and  five correlation functions (exponential, Gaussian, spherical, { cubic and Mat\'ern with $\kappa=1$) for correlation length $\theta=0.5$ over  $\cA=[0,1]^2$}.\label{GauF3}}
\end{figure}
{Because exponential, Gaussian and matern correlation models} {have relatively simple forms, the relative errors are expected to be smaller} compared to spherical and cubic ones.   {For cubic and spherical models, the discontinuity at $h=\theta$ induces more instability in the simulations.}     
\section{{Piemonte case study}}\label{R:6}
{We terminate this paper with the computation of the risk measure $\cR_1(\lambda\cA,\cD^+_{X,u})$ on pollution in Piemonte data. The  air pollution is measured by the concentration in $PM_{10}$, particulate matter with an aerodynamic diameter less than $10\mu m$. The observed values of $PM_{10}$  are frequently larger than the legal level fixed by the European directive $2008/50/EC$ (see \cite{cameletti2013spatio} for details). \\
The data  has been fitted and analyzed in  \cite{bande2006spatio}. The data contains the daily concentration of $PM_{10}$ during the winter season 2005-March 2006. {The authors} considered 24 monitoring stations for estimating the parameters of this model and 10 stations for validation.}\\
\\
\begin{figure}[h]
\centering
\fboxrule=0pt 
\fbox{\includegraphics[width=10cm, height=11cm]{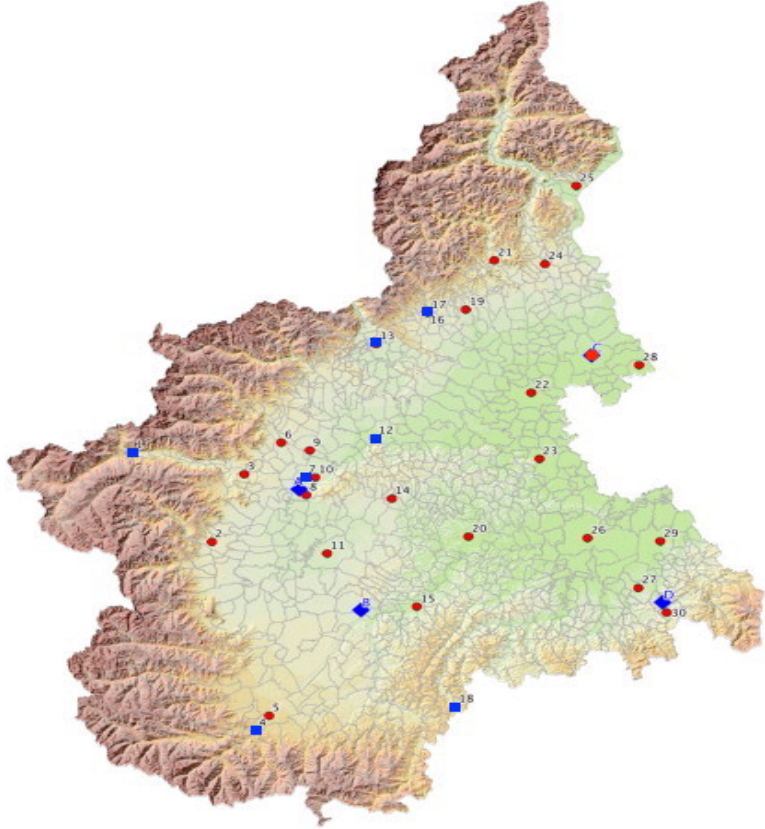}}\
\caption{\cite{bande2006spatio}. Locations of the 24 $PM_{10}$ monitoring sites (red dots) and 10 validation stations (blue squares) in northern Italy between Alps and Appenises (Piemonte region).}
\end{figure}
{The log of $PM_{10}$ has been fitted on an isotropic Gaussian process with Mat\'ern auto-correlation function. In what follows, $Y=\log PM_{10}$. Following the parameter estimation {(see \cite{bande2006spatio})}, we will use} { $\kappa=1$ and $\theta=100$. The estimation of the marginal parameters leads us to use $\mu=3.69$ and $\sigma^2=1.2762$.} 
\\
{We use the above parameters to compute the risk measure 
{$$\left(\cR_0(\lambda\cA,\cD^+_{Y,\log u})\/,  \cR_1(\lambda\cA,\cD^+_{Y\/,\log u})\right)\/,$$} 
with $\cA$ a square of side $10$km and $u$ the legal level, {i.e. $u=50$}. We use Corollary \ref {Co-risk11}, let $Y_0=\frac{Y-\mu}\sigma$ and $ u_0=(\log(50)-3.96)/\sqrt{1.2762}=0.1965$, we have}
\begin{equation*}
\begin{split}
{\cR_0(\lambda\cA,\cD^+_{Y,\log u})}=&\sqrt{1.2762}\big(\varphi(0.1965)-0.1965\overline{\Phi}(0.1965)\big)\\
=&0.3483621
\end{split}
\end{equation*}
and 
\begin{equation*}
\begin{split}
{ \cR_1(\lambda\cA,\cD^+_{Y,\log u})}=&1.2762\int_{h=0}^{14.15}f_s(h,1)\mathcal{G}(h,0.1956)\mathrm{d}h.\\
  =&0.4119461.
 \end{split}
\end{equation*}
{The random variable {$L(\cA\/,\cD^+_{Y,\log u})$} is the average over the square $\cA$ of the values of $Y$ that exceed the legal threshold $\log u$. This is a quantity of interest for health public policies. Our study shows that the standard deviation of {$L(\cA\/,\cD_{Y,\log u}^+)$} is large with respect to its expectation. This means that the dependence structure of the underlying process highly impacts the random variable {$L(\cA\/,\cD_{Y,\log u}^+)$.}
\section{Conclusion}\label{R:7}
We have proposed a spatial risk measure $\cR(\cA,\cD_{X,u}^+)$. {It} takes into account the spatial dependence {over} a region. {We showed that some proposed axioms are valid for any stationary processes. Properties such as anti-monotonicity is verified for isotropic Gaussian processes and $\cA$ a disk or a square (the same result holds for some max-stable processes, see \cite{koch2015spatial}). A simulation study emphasized the behavior of the risk measure with respect to the various parameters. Finally, the computation on pollution data showed the interest of using the variance of  {$L(\cA\/,\cD_{Y,\log u}^+)$} as a spatial risk measure in  concrete cases. }
\\
\ \\
\underline{Acknowledgements:} This work was supported by the LABEX MILYON (ANR-10-LABX-0070) of Université de Lyon, within the program "Investissements d'Avenir" (ANR-11-IDEX-0007) operated by the French National Research Agency (ANR).






\bibliographystyle{apalike}
\bibliography{sample.bib}

\end{document}